\journalname{TOP}
\begin{document}

\title{\replaced{An augmented filled function for global nonlinear integer optimization}{ A new algorithm for global nonlinear integer optimization}}

\author{Juan Di Mauro       \and
        Hugo D. Scolnik 
}

\institute{J. Di Mauro \at CONICET\\
              Departamento de Computaci\'on, Instituto de Ciencias de la Computaci\'on, Facultad de Ciencias Exactas y Naturales. UBA\\
              Pabell\'on I. Ciudad Universitaria, (C1428EGA) Buenos Aires, Argentina.\\
              Tel./Fax: (54.11) 5285-7438/7439/7440\\
              \email{jdimauro@dc.uba.ar}\\
              \and H. D. Scolnik \at 
              Departamento de Computaci\'on, Instituto de Ciencias de la Computaci\'on, Facultad de Ciencias Exactas y Naturales. UBA\\
              Pabell\'on I. Ciudad Universitaria, (C1428EGA) Buenos Aires, Argentina.\\
              Tel./Fax: (54.11) 5285-7438/7439/7440\\
              \email{hugo@dc.uba.ar}\\
}

\date{Received: date / Accepted: date}

\maketitle

\begin{abstract}
The problem of finding global minima of nonlinear discrete functions arises in many fields of practical matters. In recent years, methods based on discrete filled functions become popular as ways of solving these sort of problems. However, they rely on the steepest descent method for local searches. Here we present an approach that does not depend on a particular local optimization method, and a new discrete filled function with the useful property that a good continuous global optimization algorithm applied to it leads to an approximation of the solution of the nonlinear discrete problem (Theorem \ref{theorem:approx}). Numerical results are given showing the efficiency of the new approach.
\keywords{
Discrete Global Optimization \and Discrete Filled Function \and Nonlinear Optimization\and  Approximate Algorithms.}
\end{abstract}

 \section{Introduction}

\deleted{The discrete optimization of a nonlinear function, constrained to a box region is, in general, a problem hard to solve.}
\added{This paper is concerned with the analysis and performance of an algorithm designed to ”try to” solve model (\ref{problem}) efficiently.}
\begin{equation}
\label{problem}
\min_{x\in X}f(x) 
\end{equation}
\[f:X\rightarrow \mathbb{R}; X=\{x\in\mathbb{Z}^n:a_i\leq x\leq b_i, i=1,\ldots,n\}\]
\added{where $a_i$ , $b_i$ are, respectively, the lower and the upper bound of the variable $x_i$ ,
for $i = 1, \ldots, n$. We warn the reader that in general, no algorithm ensures con-
vergence to a global minimum. Nonetheless, we obtained global convergence on $77.5\%$
 of the preliminary numerical experiments reported in the appendix, on small problems.
}

\replaced{It is well known that discrete models are NP. Solving model (\ref{problem}) in particular has
shown to be a difficult task, even for polynomial functions with a few number of
variables \cite{adleman}}{Even in simple cases such as polynomials with a few variables, reaching the global minimum can be a task of high complexity as was shown in \cite{adleman}}. Moreover, the existence of multiple local minima may cause that an optimization algorithm stops at one of such minima, eventually giving minimizers of poor quality. 
	
	\replaced{Ways to}{The ways to} overcome the last issue include metaheuristics methods such as tabu search or simulated annealing and also exact methods as branch and bound, cutting planes or Lagrangian relaxation. 
	
	In recent years, a technique that makes use of an auxiliary function to escape from local minima, known as the filled functions approach, has gained attention.  

Ge \cite{ge1}, \cite{ge2} originally introduced the filled function method for continuous optimization. Later, Zhu  \cite{zhu1} carried that technique into the field of discrete optimization. Several discrete filled functions \replaced{have been proposed}{were invented} with one or more parameters and with additional features. However, in all  cases, the discrete steepest descent algorithm is employed in the search for a local minimizer. The use of that algorithm poses somewell known limitations in the effectiveness  of the optimization procedure. Even more, that choice conditions the definition of a filled function. For example, regarding a basin as a set of points that converges to a local minimum with the steepest descent algorithm. 
  
    Besides that, much of the efforts made over the years to have powerful continuous or discrete optimizations algorithms suggest that to constrain the definitions of a general method to a specific algorithm can be hardly considered as a reasonable approach. As will be shown, much can be gained preserving only the essential features of the process, while leaving other aspects unspecified, such as the local search procedure. 

    \replaced{However, some conditions must be imposed on the searching procedure to better
the performance of the filled functions approach}{However, certain conditions need to be assumed over that procedure expected to be satisfied if an optimization algorithm is good enough.}

    	As a consequence, new definitions are needed,  maintaining some level of accordance with the old ones.
     
     Moreover, a new filled function with some additional useful properties is desiderable. For instance,
   a useful result is that a good continuous global optimization algorithm applied to the new filled function gives an approximation to the discrete solution of the problem (see Theorem \ref{theorem:approx}).

          The main contribution of this paper is to present a new filled function which is independent of the chosen nonlinear optimization algorithm, \replaced{and allows the use of the best suitable method, avoiding that the evaluation of a filled function be based merely on its performance with elementary descent
algorithms.}{allowing in such a way to use the most powerful methods available nowadays like those based on trust regions, etc (see for instance \cite{nocedalbook}). In this sense, the performance problems observed in the classical filled functions approaches are mainly due to the use of the elementary gradient descent algorithm.}

	The paper is organized as follows: section \ref{sec:1} gives preliminary notions and notation as well as the new concepts. Also, key results relating the new definitions with the previous ones are provided. Section \ref{sec:2} introduces the concept of a filled function related to a general optimization algorithm. Section \ref{sec:3} shows one filled function verifying these definitions. Finally, section \ref{sec:4} shows computational experiments with test functions and compares the results with \replaced{others}{ other} in the literature.
 \section{Notation and Definitions} \label{sec:1}
 The set $\mathbb{Z}^n=\mathbb{Z}\times\mathbb{Z}\times\ldots\times\mathbb{Z}$ ($n$ times) is the set of the $n$-tuples $(x_1,\ldots,x_n)$ with $x_i\in\mathbb{Z},\ i=1,\ldots,n$.
 The vector $e_i\in\mathbb{R}^n$ is the elementary vector $i$, such that the $i$-th component is $1$ and all other entries are zero.

 If $x\in\mathbb{R}^n$, then $[x]$ is the point $x$ with rounded entries, that is

 \begin{equation}
   [x]_i= \Bigg\{ \begin{array}{cc}
                   \lfloor x_i+x_i/(2|x_i|)\rfloor & \textnormal{ if } x_i\notin \mathbb{Z}\\
                   x_i&\textnormal{ if } x_i\in \mathbb{Z}
                  \end{array}
\ i=1,\ldots,n 
 \end{equation}

 If $x$ is in $\mathbb{Z}^n$, $\mathcal{N}(x)$ is the discrete vicinity of $x$, \[\mathcal{N}(x)=\{x\pm e_i,\ i=1,\ldots,n\}\cup\{x\}.\] 
 
 \added{The set of directions in $\mathbb{Z}^n$ is }
 \[D=\{\pm e_i,\ i=1,\ldots,n\}\]
 
\deleted{The definitions for a basin, a discrete path, and others are the same as those in the literature (see, for example, \cite{ge1}).} 
\begin{definition}
 \added{A discrete local minimizer of $f$ is a point $x^*\in X$ such that $f(x^*)\leq f(x)$ for all $x\in \mathcal{N}(x^*)$. A discrete global minimizer of $f$ is a point $x^*\in X$ such that $f(x^*)\leq f(x)$ for all $x\in X$.}
\end{definition}

\added{Let $X=\{x\in\mathbb{Z}^n:a_i\leq x\leq b_i, i=1,\ldots,n\}$}
\begin{definition}
\added{A discrete path in $X$ between the points $x^*,x^{**}\in X$ is a sequence $\{x^{(i)}\}_{i=0}^n$ with}
\begin{enumerate}
 \item \added{$x^{(0)}=x^*$, $x^{(n)}=x^{**}$ }
 \item \added{$x^{(i)}\in X$ for all $i$}
 \item \added{$x^{(i)}\neq x^{(j)}$ for $j\neq i$}
 \item \added{$\|x^{(1)}-x^{*}\|=\|x^{(i+1)}-x^{(i)}\|=\|x^{**}-x^{(n-1)}\|=1$}
\end{enumerate}
\end{definition}
\added{The points $x^*$ and $x^{**}$, are said to be pathwise connected in $X$ if a discrete path in $X$ between them exists. If every two different points of a domain $X$ are pathwise connected in $X$ then $X$ is a pathwise connected domain or simply a connected domain.}
\begin{definition}
 \added{Let $x\in X$. A discrete descent direction of $f$ at $x$ over $X$ is $d\in D$ such that $x+d\in X$ and $f(x+d)<f(x)$. Let $D^*$ be the set of all descent directions of $f$ at $x$ over $X$. A discrete steepest descent direction of $f$ at $x$ is a descent direction $d^*$ such that $f(x+d^*)\leq f(x+d)$ for all $d\in D^*$}
\end{definition}

\added{We recall the discrete steepest descent algorithm to find a local minimizer for $f$ over $X$ starting at $x$  }
\begin{enumerate}[label*=Step \arabic*. ]
 \item \added{Choose an initial point $x\in X$.}
 \item \added{If $x$ is a discrete local minimizer of $f$ over $X$ then stop. Else, let $d^*$ be the discrete steepest descent direction of $f$ at $x$ over $X$.\label{algo:stepdesc:2}}
 \item \added{Set $x\leftarrow x+d^*$ and go to \ref{algo:stepdesc:2} }
\end{enumerate}

\begin{definition}
 \added{A discrete \emph{usual} basin $B^*\subset X$ of $f$ at $x^*$ is a connected domain which contains $x^*$ and all those $x\in X$ for which the discrete steepest descent algorithm for $f$ starting at $x$ converges to $x^*$}
\end{definition}

  \replaced{The notation $a \leftarrow b$ means, as usual, that $a$ takes the value of $b$, and $x \leftarrow \mathcal{C}(f, x_0)$
means that $x \in \mathbb{R}^n$ is the value returned by algorithm $\mathcal{C}$ applied to a function
$f : \mathbb{R}^n \rightarrow \mathbb{R}$ bounded from below, starting at $x_0 \in \mathbb{R}^n$ . We must impose certain
assumptions to $\mathcal{C}$:}{As always, $a \leftarrow b$ means that $a$ takes the value of $b$.
  Although we do not want to constrain our definitions to a particular local search algorithm, some assumptions about $\mathcal{C}$ must be made. However, those assumptions are expected to be satisfied by most of the local search procedures.
  $\mathcal{C}$ will be a continuous, deterministic optimization algorithm that takes a function $f:\mathbb{R}^n\rightarrow \mathbb{R}$ bounded from below,
  an initial point $x_0\in\mathbb{R}^n$ and returns $x\in\mathbb{R}^n$ with the following properties:}
  \begin{enumerate}[label=\textbf{A\arabic*}]
  \item \added{$\mathcal{C}$ is deterministic }
   \item  {If $x\leftarrow \mathcal{C}(f,x_0)$ then $f(x)\leq f(x_0)$ \label{hipC1}}
   \item  {If $x_0$ is in $\mathbb{Z}^n$ and $x\leftarrow\mathcal{C}(f,x_0)$, then there is $d\in\{\pm e_i: i=1,\ldots,n\}$ such that $x\leftarrow\mathcal{C}(f,x_0+d)$
   and $\|x_0+d-x\|< \|x_0-x\|$ 
   or $x_0=x$. \label{hipC3}}
  \end{enumerate}
  
  \begin{definition}
  
  A basin $B^*$ of $f$ at $x^*$, a local minimizer of $f$ (not necessarily a discrete local minimum) is the set of all points which converge to $x^*$ with $\mathcal{C}$, that is:
  \[B^*=\{x: x^*\leftarrow \mathcal{C}(f,x)\}.\]
  \end{definition}
  \begin{definition}
  
  A discrete basin $B_e^*$ of $f$ at $x^*$, a discrete local minimizer of $f$ (but not necessarily a local minimum) is the set
  \[B_e^*=\{x\in\mathbb{Z}^n: x'\leftarrow \mathcal{C}(f,x)\textnormal{ and }[x']=x^*\}.\]\label{def:cuencadisc}
  \end{definition}
  From hereafter the concepts of basin or u-basin will be understood as the discrete versions of them.
  \\
   
  The hypothesis \ref{hipC3} guarantees an essential property of the basins:
  \begin{theorem}
   A discrete basin is a connected discrete domain.
  \end{theorem}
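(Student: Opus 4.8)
The plan is to reduce the statement to a single claim: \emph{every point of $B_e^*$ is joined to $x^*$ by a discrete path contained in $B_e^*$.} Once this is established, given any two $p,q\in B_e^*$ I would concatenate the path from $p$ to $x^*$ with the reversal of the path from $q$ to $x^*$, obtaining a walk inside $B_e^*$ from $p$ to $q$, and then extract a genuine discrete path (one with pairwise distinct vertices) in the usual way. So the whole argument hinges on building, for an arbitrary starting point, a descent chain that reaches $x^*$ without leaving $B_e^*$.

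First I would fix $p\in B_e^*$ and set $x':=\mathcal{C}(f,p)$ (well defined by \textbf{A1}), so that $[x']=x^*$. Starting from $p_0:=p$, I would invoke hypothesis \ref{hipC3} repeatedly to construct integer points $p_0,p_1,p_2,\ldots$: given $p_i$ with $\mathcal{C}(f,p_i)=x'$, \ref{hipC3} returns either $p_i=x'$ (in which case I stop) or a direction $d\in D$ with $\mathcal{C}(f,p_i+d)=x'$ and $\|p_i+d-x'\|<\|p_i-x'\|$, and I set $p_{i+1}:=p_i+d$. The essential bookkeeping is that the \emph{same} output $x'$ is preserved along the chain, so $[\mathcal{C}(f,p_i)]=x^*$ and hence every $p_i$ lies in $B_e^*$; moreover consecutive terms satisfy $\|p_{i+1}-p_i\|=1$ because $d\in D$, and the strictly decreasing distances $\|p_i-x'\|$ force the $p_i$ to be pairwise distinct.

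The key step, and the main obstacle, is the termination-and-identification argument. All the $p_i$ are integer points inside the closed ball of radius $\|p_0-x'\|$ about $x'$, and such a ball contains only finitely many lattice points; since the $p_i$ are distinct, the sequence cannot be infinite and must halt at some $p_m$. By \ref{hipC3}, the only way the construction can halt is through the alternative $p_m=x'$, which shows that $x'$ is in fact an integer point. Because the rounding operator is the identity on $\mathbb{Z}^n$, this gives $x'=[x']=x^*$, so $p_m=x^*$ and the finite sequence $p_0,\ldots,p_m$ is exactly a discrete path in $B_e^*$ from $p$ to $x^*$. (Taking $p=x^*$ in particular yields $\mathcal{C}(f,x^*)=x^*$, confirming $x^*\in B_e^*$, so the common endpoint indeed belongs to the set.)

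I expect the delicate points to be precisely those two: arguing that the distance-decreasing chain of \ref{hipC3} cannot continue forever — which rests on the finiteness of lattice points in a bounded ball rather than on any metric completeness — and then converting the a priori real terminal value $x'=\mathcal{C}(f,p)$ into the integer minimizer $x^*$ through $[x']=x^*$. The remaining ingredients (unit-length steps, invariance of membership in $B_e^*$ along the chain, and the passage from a walk to a simple path when joining $p$ and $q$ through $x^*$) are routine and would be stated without detailed computation.
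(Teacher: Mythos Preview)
Your proposal is correct and follows essentially the same approach as the paper: build, via repeated application of \ref{hipC3}, a discrete path in $B_e^*$ from any member to $x^*$, and then concatenate two such paths to connect an arbitrary pair. You are in fact more careful than the paper's sketch, making explicit the termination argument (finitely many lattice points in a bounded ball), the identification $x'=[x']=x^*$ that forces the chain to end at the integer minimizer, and the passage from the concatenated walk to a simple path.
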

  \begin{proof}
  Let $B^*$ be a basin of $f$ at $x^*$ and $x',x''\in B^*$. By \ref{hipC3} there are discrete paths $\{x'=x'_1,\ldots,x'_m=x^*\}$ and $\{x''=x''_1,\ldots,x''_{m-1},x''_m=x^*\}$ with all points in $B^*$.
  The path $\{x'=x'_1,\ldots,x^*,x''_{m-1},\ldots,x''_1=x''\}$ is a discrete path and has all its points 
  in $B^*$.
  \end{proof}

  It is important to point out that if $\mathcal{C}$ is a \textit{good algorithm} then any point that converges to $x^*$ using the steepest descent,
  converges with $\mathcal{C}$ to a point at least as good as $x^*$.  That justifies the last hypothesis over $\mathcal{C}$.
 
  \begin{enumerate}[label=\textbf{A\arabic*},resume]
  \item \label{hipC4} If $U^*$ is an u-basin of $f$ at $x^*$, and $x$ is in $U^*$, then $x$ is in $B^*$, a basin of $f$
  at $x^*_b$ with $f(x^*_b)\leq f(x^*)$.
  \end{enumerate}

  The order relation for the basins is the same as in the case of u-basins. 
   Namely, if $B^*$ and $B^{**}$ are two basins of $f$ at $x^*$ and $x^{**}$ respectively, then $B^{**}$ is lower than $B^*$ if $f(x^*)\leq f(x^{**})$ (and higher if $f(x^*)> f(x^{**})$)
  Moreover since $\mathcal{C}$ is deterministic, if $B^*$ and $B^{**}$ are different
  then $B^* \cap B^{**}=\emptyset$.

  \begin{definition}
  Given $x^*$, a discrete local minimizer of $f:X\rightarrow\mathbb{R}$, $X\subset \mathbb{Z}^n$ and $B^*$ a discrete
  basin of $f$ at $x^*$, $F:X\rightarrow\mathbb{R}$ is a discrete filled function of $f$ at $x^*$ if it satisfies the following:
   \begin{enumerate}[label=\textbf{D\arabic*}]
   \item \label{d1} $x^*$ is a strict (discrete) local maximizer of $F$ over $X$.
   \item \label{d2} 
   \replaced{$F$ has discrete local minimizers neither in $B^*$ nor in any}
   {$F$ has not discrete local minimizers in $B^*$ or in any} basin of $f$ higher than $B^*$. 
   
   \item \label{d3} If $f$ has a basin $B^{**}$ in $x^{**}$ lower than $B^*$, then there is a point $x'\in B^{**}$ that minimizes $F$
   in the discrete path $\{x^*,\ldots,x',\ldots,x^{**}\}$ in $X$.
   \end{enumerate}
   \label{def:filleddisc}
   \end{definition}
   
   The following theorem shows that Definition \ref{def:cuencadisc} preserves the properties
   of the discrete filled functions.
    \begin{theorem}
    
    A discrete filled function with u-basins, satisfies the conditions \ref{d1}, \ref{d2} and \ref{d3} with the Definition \ref{def:cuencadisc} of a basin.
    \end{theorem}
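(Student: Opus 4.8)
The plan is to verify the three defining conditions \ref{d1}, \ref{d2} and \ref{d3} separately, using hypothesis \ref{hipC4} as the bridge between the two notions of basin and the disjointness of distinct $\mathcal{C}$-basins (a consequence of the determinism of $\mathcal{C}$, already observed in the text) as the second main tool. For a discrete local minimizer $p$ of $f$, write $U_p$ for its usual steepest-descent basin and $C_p$ for its $\mathcal{C}$-basin in the sense of Definition \ref{def:cuencadisc} (so $C_{x^*}$ is the set $B_e^*$ of that definition). The hypothesis is that $F$ satisfies \ref{d1}--\ref{d3} when ``basin'' is read as a u-basin; the goal is to re-establish them with the basins $C_p$.

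Condition \ref{d1} makes no reference to any basin: it only states that $x^*$ is a strict discrete local maximizer of $F$ over $X$, so it transfers verbatim. For \ref{d2} I would argue by contradiction. Assume $z$ is a discrete local minimizer of $F$ lying in $C_{x^*}$ or in a $\mathcal{C}$-basin higher than it. Running the steepest-descent algorithm from $z$ produces a discrete local minimizer $r$, so $z\in U_r$. By \ref{hipC4}, $z$ also lies in some $\mathcal{C}$-basin whose representative $q$ satisfies $f(q)\le f(r)$; but $z$ already lies in the prescribed $\mathcal{C}$-basin, and since $\mathcal{C}$-basins are pairwise disjoint the two coincide, forcing the representative value of the prescribed basin to be at most $f(r)$. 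As that basin is $C_{x^*}$ or higher, its representative value is at least $f(x^*)$, whence $f(r)\ge f(x^*)$ and $U_r$ is the u-basin at $x^*$ or higher. The u-basin form of \ref{d2} forbids a local minimizer of $F$ in $U_r$, contradicting the choice of $z$.

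The delicate condition is \ref{d3}. Given a $\mathcal{C}$-basin $C_{x^{**}}$ lower than $C_{x^*}$, I would pass to the u-basin $U_{x^{**}}$ of the same local minimizer, note that it is lower than $U_{x^*}$ for the identical value-based order, and apply the u-basin form of \ref{d3} to obtain a point $x'\in U_{x^{**}}$ that minimizes $F$ along a discrete path $\{x^*,\dots,x',\dots,x^{**}\}$. What remains is to exhibit the witness inside $C_{x^{**}}$; here \ref{hipC4} must be used in its less convenient direction, since it only guarantees that $x'\in U_{x^{**}}$ belongs to some $\mathcal{C}$-basin whose representative is at least as good as $x^{**}$, not necessarily to $C_{x^{**}}$ itself.

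I expect this mismatch to be the main obstacle: \ref{hipC4} injects u-basins into equal-or-deeper $\mathcal{C}$-basins but provides no exact correspondence, so the minimizer supplied for \ref{d3} may fall in a $\mathcal{C}$-basin strictly deeper than the prescribed $C_{x^{**}}$. My intended remedy is to apply the construction to the deepest $\mathcal{C}$-basin met along the chosen path and then use disjointness together with the basin order to relocate the minimizer into $C_{x^{**}}$; the degenerate case in which several distinct local minimizers share the value $f(x^*)$, so that the order fails to be strict, would be handled under the customary assumption that the relevant values of $f$ are distinct.
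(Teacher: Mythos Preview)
Your treatment of \ref{d1} and \ref{d2} is essentially the paper's: \ref{d1} is basin-free, and your contradiction for \ref{d2} is the contrapositive of the paper's (they start from the u-basin form of \ref{d2} to force $f(x_1)\le f(x^*)$, then use \ref{hipC4} and disjointness; you start from disjointness and \ref{hipC4}, then invoke the u-basin \ref{d2}). Either direction is fine.

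The gap is in \ref{d3}. You correctly isolate the obstacle: \ref{hipC4} only sends $x'\in U_{x^{**}}$ into \emph{some} $\mathcal{C}$-basin at least as deep, not into the prescribed $C_{x^{**}}$. But your remedy (``relocate the minimizer into $C_{x^{**}}$'' via the deepest basin along the path, plus an extra genericity assumption on the values of $f$) does not work and is not how the paper proceeds. The paper reads \ref{d3} existentially: once $x^*$ is not a global minimizer, one must produce \emph{some} lower $\mathcal{C}$-basin $B^{**}$ together with a path $\{x^*,\dots,x'',\dots,x^{**}\}$ whose $F$-minimizer lies in $B^{**}$; it does not insist on the originally given basin. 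Under this reading the argument goes through without any distinct-values hypothesis.

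The key idea you are missing is to reuse the $\mathcal{C}$-basin version of \ref{d2} that you have just proved, rather than trying to push the u-basin witness through \ref{hipC4}. Starting from the u-basin path $\{x^*,\dots,x',\dots,x_u^{**}\}$, the paper proceeds by cases: (i) if $x'$ is itself a discrete local minimizer of $F$, then by the new \ref{d2} it lies in a lower $\mathcal{C}$-basin $B^{**}$, and one extends the path inside $B^{**}$ (using that $\mathcal{C}$-basins are connected) to reach $x^{**}$; (ii) if the whole path sits in $B^*$ and $x'$ is not a local minimizer of $F$, follow a descent path of $F$ from $x'$ to a local minimizer $x_n$ of $F$, which again by \ref{d2} lies in a lower $\mathcal{C}$-basin, then extend inside that basin; (iii) if the path already crosses into a lower $\mathcal{C}$-basin, truncate and extend there. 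In every case the $F$-minimizer on the assembled path lands in the chosen lower $\mathcal{C}$-basin by construction, not by \ref{hipC4}.
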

    \begin{proof}

     The first condition does not depend upon the definition of a basin, so there is no need to prove anything for \ref{d1}. 
     \\
     
     For the condition \ref{d2}, assume that $B^{**}$ and $B^*$ are two distinct basins of $f$ at $x^{**}$ and $x^*$
     respectively and, by contradiction, $B^{**}$ is higher than $B^*$ and $x'\in B^{**}$ is a local minimizer of $F$.
     
     Let $U_1$ be a u-basin of $f$ at $x_1$ and $x'\in U_1$. 
	 Considering the u-basin $U^*$ of $f$ at the discrete local minimizer $x^*$, by \ref{d2} the discrete local minimizer $x'$ of $F$ cannot be in an u-basin higher than $U^*$,
     therefore $f(x_1)\leq f(x^*)$. 
     
     But then,
     by \ref{hipC4} $x'\in B_1$, and $B_1$ is a basin of $f$ at $x'_1$ with
     \[f(x'_1)\leq f(x_1) \leq f(x^*) < f(x^{**}).\]
     So $x'\in B_1 \cap B^{**}$ but $B_1 \cap B^{**}=\emptyset$ because the basins $B_1$ and $B^*$ are different.
     
     It remains to show that $F$ cannot have a local minimizer in $B^*$. But, if $x'$ is a local minimizer of $F$ in $B^*$, then
     $x'$ belongs to an u-basin $U_1$, which, by \ref{hipC4} is higher than $U^*$ or is $U^*$ contradicting \ref{d2} 
     due to the definition of an u-basin.
    \\
          
     For the last condition, suppose that $x^*$ is not a global minimizer of $f$ and let $\{x^*,\ldots,x',\ldots,x_u^{**}\}$
     be the discrete path
     in \ref{d3} according to u-basins.
     Now we have to prove that there is a discrete path $\{x^*,\ldots,x'',\ldots,x^{**}\}$ with $x''\in B^{**}$ the 
     minimizer of $F$ in that path and $B^{**}$ a basin of $f$ in $x^{**}$ lower than $B^*$, the basin o $f$ at $x^*$.
     
     The proof goes by cases:
     \begin{enumerate}[label=\roman*.]
     \item $x'$ is a discrete local minimizer of $F$:\label{i}
     
     Then $x'$ is in $B^{**}$, a basin of $f$ in $x^{**}$ lower than $B^*$, by the previous condition, \ref{d2}.
     It is enough to extend the path $\sigma_1=\{x^*,\ldots,x'\}$ with $\sigma_2=\{x_1,\ldots,x^{**}\}$ a discrete path
     in $B^{**}$ with $x_1\in\mathcal{N}(x')$, and take the
     element that minimizes $F$ in the set $\{x',x_1,\ldots,x^{**}\}$ as the minimizer of $F$ in the path
     $\sigma_1\sigma_2=\{x^*,\ldots,x',x_1,\ldots,x^{**}\}$.
     The existence of $\sigma_2$ is guaranteed since a basin is a connected set.
     \item The path $\sigma=\{x^*,\ldots,x',\ldots,x_u^{**}\}$ is in the same basin $B^*$, and $x'$ is not a local minimizer of $F$:\label{ii}
     
	It is enough to take $x'$ and a descent
	path $\sigma_1=\{x',x_1,\ldots,x_n\}$ of $F$ with $x_n$ a discrete local minimizer of $F$.  
	By the previous condition, $x_n$ is in a basin $B^{**}$ of $f$ in $x^{**}$, lower than $B^*$.
	It suffices to concatenate the path $\sigma_1$ with some path $\sigma_2=\{x_{n+1},\ldots,x^{**}\}$ in
	$B^{**}$ and choose the minimizer
	of $F$ in the set $\{x_n,x_{n+1},\ldots,x^{**}\}$ as the minimizer of $F$ in the path 
	$\{x^*,\ldots,x',\ldots,x_n,\ldots,x^*\}$.
     \item The path has points in at least two basins and $x'$ is not a local minimizer of $F$:\label{iii}
      
      If $\{x^*,\ldots,x',\ldots,x_u^{**}\}$ is 
      \[\{x^*,x_1,\ldots,x_n,x_{n+1}\ldots,x_m,\ldots,x_u^{**}\}\]
      with $\sigma_1=\{x^*,x_1,\ldots,x_n\}$ a path in $B^*$,
     $\sigma_2=\{x_{n+1},\ldots,x_m\}$ a path in $B^{**}$, $B^{**}$ a basin of $f$ lower than $B^*$ and $x'\in\{x_{n+1},\ldots,x_m\}$,
     then it suffices to extend the path $\sigma_2$ with $\sigma'=\{x'_{m+1},\ldots,x^{**}\}\in B^{**}$ a path in $B^{**}$
     and take the minimizer of $F$ in the set $\{x_{n+1},\ldots,x_m,x'_{m+1},\ldots,x^{**}\}$ as the minimizer of $F$ in the
     path 
     \[\{x^*,x_1,\ldots,x_n,x_{n+1}\ldots,x_m,x'_{m+1},\ldots,x^{**} \} \in B^{**}.\] 
      Otherwise, if $x'\in \sigma_1$, as $\sigma_1$ has all its elements in the same basin, case \ref{ii} holds.	
     \end{enumerate}
    \end{proof}
    
    \replaced{For instance, it is proved in \cite{newfilledYang2} that (\ref{eq:filled}) is a filled function for any target function $f$ and adequate value of the parameter $r>0$ provided that the problem is only box-constrained}
    {A filled function is usually constructed as an auxiliary function from the target function. As an example, if the target function is $f$, it is proved in \cite{discretefilledng2} that 
    is a filled function of $f$ for adequates values of $\mu$ and $\rho$.}
    \begin{equation}
    \label{eq:filled}
     F_{r,x*}(x)=\Big(\frac{1}{\|x-x^*\|^2+1}+1\Big)h\Big(h_r(f(x)-f(x^*))+\sum _{i=1}^m h_r(g_i(x)-r)\Big)
    \end{equation}
    \added{where } 

    \[h_r(t)=\Bigg\{ \begin{array}{cc}
                      0,&t\leq -r\\
                      \frac{r-2}{r^3}t^3+\frac{2r-3}{r^2}t^2+t+1,&-r< t\leq 0\\
                      t+1,&t>0
                      \end{array}
\]
   \[h(t)=\Bigg\{ \begin{array}{cc}
                      0,&t\leq \frac{1}{2}\\
                      -16t^3+36t^2-24t+5,&\frac{1}{2}< t\leq 1\\
                      1,&t>1
                      \end{array}
\]
    \added{In a region}
    \[S=\{x\in X\mid g_i(x)\leq 0,i=1,\ldots,m\}\]
    \added{(and when $S=X$ the sum involving $g_i$ is eliminated).}
    
    From $x^*$ a local minimizer of $f$, it is expected that the minimization of the filled function give a new point $x'$ which is not necessarily a local minimizer of $f$ but can be used as initial point for a minimization algorithm of $f$ and as a result of that, a local minimizer $x^{**}\neq x^{*}$ with $f(x^{**})\leq f(x^{*})$ will be found.

    \subsection{Generic Algorithm for Discrete Filled Functions}

    The generic algorithm used in the optimization process with filled functions \deleted{here} is the following:
    
\begin{enumerate}[label*=Step \arabic*. ]
 \label{genericalgo}
 \item Initialization. 
 
  Choose a starting point $x_0\in X$. Let $q=2n$. Set the bounds of each parameter of the filled function $F$. Initialize the parameters.\label{algo:s1}
  \item Local minimization of $f$.\label{algo:s2}
  \begin{enumerate}[label=\roman*.]
   \item Do $x'\leftarrow \mathcal{C}(f,x_0)$,
   \item $x^*\leftarrow argmin_{x\in\mathcal{N}([x'])}(f(x))$. 
   \end{enumerate}
  \item Neighborhood search. \label{algo:s3}
  \begin{enumerate}[label=\roman*.]
   \item Let $\mathcal{N}(x^*)\setminus\{x^*\}=\{x_1,\ldots,x_q\}$, $\ell\leftarrow 1$ \label{algo:s31}
   \item Define $x_c\leftarrow x_\ell$\label{algo:s32}
  \end{enumerate}
  \item Local minimization of $F$.\label{algo:s4}
  \begin{enumerate}[label=\roman*.]
   \item Do $x'_c\leftarrow \mathcal{C}(F,x_c)$,\label{algo:s41}
   \item $x'\leftarrow argmin_{x\in\mathcal{N}([x'_c])}(f(x))$. \label{algo:s42}
   \end{enumerate}
  \item Checking the status of $x'$.\label{algo:s5}.
  
   If $f(x')<f(x^*)$, set $x_0\leftarrow x'$ and go to \ref{algo:s2}.
   \item Checking other search directions.
   
    Adjust the parameters of the filled function $F$. If $x'$ is not a vertex in $X$ go to \ref{algo:s4}. Else, set $\ell \leftarrow \ell+1$.
    If $\ell \leq q$, go to \ref{algo:s3}\ref{algo:s32} 
    If the parameters of $F$ exceed their bounds, take $x^*$ as the global minimizer.
\end{enumerate}
 \section{Filled function with respect to an algorithm}
\label{sec:2}
The usual definition of a discrete filled function assumes that the local search is made using the steepest descent method. It will be advantageous that the definition does not rely upon a particular algorithm because in such a way more powerful local search methods can be employed.
\begin{definition}

	Let $x^*$ be a local minimizer of $f:X\rightarrow\mathbb{R}$, $B^*$ the basin of $f$ at $x^*$ and $\mathcal{C}$ a deterministic optimization algorithm that satisfies the hypothesis \ref{hipC1}-\ref{hipC4}. Then $F$ is said to be a filled function of $f$ at $x^*$ \textbf{with respect to} $\mathcal{C}$ if:
\begin{enumerate}[label=\textbf{DC}\arabic*]
 \item\label{dc1}: $x^*$ is a strict local maximizer of $F$.
 \item\label{dc2}: For any $x$ if $x'\leftarrow \mathcal{C}(F,x)$ and $[x']$ is a discrete local minimizer of $F$ then \replaced{either $x'=x$, or $x''\leftarrow \mathcal{C}(f,[x'])$ implies $f([x''])\leq f(x^*)$}{$x'=x$ or being $x''\leftarrow \mathcal{C}(f,[x'])$, it holds that $f([x''])\leq f(x^*)$.}
\end{enumerate}
\end{definition}
The second condition prevents that the optimization procedure ends at points \replaced{where}{ such that} the value of $f$ increases.

\section{A filled function with respect to $\mathcal{C}$}
\label{sec:filledwrt}
\label{sec:3}
   Let $F(x^*,.): \mathbb{R}^n\rightarrow  \mathbb{R}$ \added{be }a discrete filled function of $f$ at $x^*$ (in the usual sense).
Define $\hat{F}(x^*,.): \mathbb{R}^n\rightarrow  \mathbb{R}$ as 
\begin{equation}
    \hat{F}(x^*,x)=F(x^*,x)+|F(x^*,x)|\sum_{i=1}^n sin^2(x_i\pi)
   \end{equation} \deleted{(in the definition the short notation $\hat{F}$ is used instead of $\hat{F}(x^*,.)$)}. Note that $F(x^*,x)=\hat{F}(x^*,x)$ for all $x\in\mathbb{Z}^n$. 
  
The function defined by the previous expression can be seen, informally as a wrapper for an existent filled function that made it suitable to be used with an arbitrary local search algorithm $\mathcal{C}$ (for example a continuous one) and additionally keeps the discrete nature of the problem.

The goal now is to prove that if $F$ is a discrete filled function of $f$, it can be translated into a filled function of $f$ with respect to $\mathcal{C}$.

\begin{theorem}
If $F$ is a discrete filled function of $f$ at $x^*$ and $B^*$ is the basin of $f$ at $x^*$, then 
$\hat{F}$ defined as above is a filled function of $f$ with respect to $\mathcal{C}$ at $x^*$.
\end{theorem}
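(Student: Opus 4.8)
The plan is to verify the two defining conditions \ref{dc1} and \ref{dc2} of a filled function with respect to $\mathcal{C}$, and the entire argument rests on the single structural fact already recorded, namely that $\hat{F}(x^*,x)=F(x^*,x)$ for every $x\in\mathbb{Z}^n$, since $\sum_{i}\sin^2(x_i\pi)$ vanishes exactly on the integer lattice. Because the discrete neighborhood $\mathcal{N}(p)$ of an integer point $p$ consists only of integer points, this identity shows at once that $\hat{F}$ and $F$ have the same discrete local maximizers and the same discrete local minimizers: for $p\in\mathbb{Z}^n$ and every $y\in\mathcal{N}(p)$ one has $\hat{F}(p)=F(p)$ and $\hat{F}(y)=F(y)$, so the inequalities defining discrete extremality transfer verbatim. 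First I would isolate this equivalence, since both conditions reduce to it.

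Condition \ref{dc1} is then immediate: by \ref{d1}, $x^*$ is a strict discrete local maximizer of $F$, and since $\hat{F}$ agrees with $F$ on $\mathcal{N}(x^*)\subset\mathbb{Z}^n$, the very same strict inequalities make $x^*$ a strict discrete local maximizer of $\hat{F}$. I would remark here that no continuous local maximum can be expected, because $\hat{F}-F=|F|\sum_{i}\sin^2(x_i\pi)\ge 0$ pointwise while $F$ need only be a \emph{discrete} maximizer at $x^*$; this is precisely why \ref{dc1} must be read in the discrete sense, consistently with \ref{d1}.

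For condition \ref{dc2}, I would fix an arbitrary $x$, set $x'\leftarrow\mathcal{C}(\hat{F},x)$, and assume $[x']$ is a discrete local minimizer of $\hat{F}$; if $x'=x$ the first alternative of \ref{dc2} holds, so I assume $x'\neq x$. By the equivalence above, $[x']$ is then a discrete local minimizer of $F$. Invoking the previous theorem, $F$ satisfies \ref{d1}--\ref{d3} relative to the basins of Definition \ref{def:cuencadisc}; in particular \ref{d2} forbids $F$ from having any discrete local minimizer in $B^*$ or in a basin whose minimizer has $f$-value exceeding $f(x^*)$. Hence $[x']$ must lie in a basin $B^{**}$ of $f$ at a discrete local minimizer $x^{**}$ with $f(x^{**})\le f(x^*)$. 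Finally, writing $x''\leftarrow\mathcal{C}(f,[x'])$, Definition \ref{def:cuencadisc} applied to $[x']$ identifies this basin as the one centered at $[x'']$, so $x^{**}=[x'']$ and therefore $f([x''])=f(x^{**})\le f(x^*)$, which is the second alternative of \ref{dc2}.

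The main obstacle I anticipate is this last identification step rather than any computation: one must be sure that $\mathcal{C}(f,\cdot)$ started at $[x']$ rounds to the center of a genuine basin, so that $[x']$ belongs to a single well-defined basin (here determinism of $\mathcal{C}$, assumption \textbf{A1}, is essential), and that \ref{d2} really excludes $B^*$ together with every higher basin, leaving only basins with $f(x^{**})\le f(x^*)$. The potential gap that $[x'']$ need not a priori be a discrete local minimizer of $f$, and the passage from a usual (u-basin) filled function to one phrased through the $\mathcal{C}$-basins of Definition \ref{def:cuencadisc}, are both settled by the earlier theorem and should be cited rather than reproved; everything else is a direct consequence of $\hat{F}$ and $F$ coinciding on $\mathbb{Z}^n$.
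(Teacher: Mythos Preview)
Your proposal is correct and follows essentially the same route as the paper: both verify \ref{dc1} directly from \ref{d1} via the identity $\hat{F}=F$ on $\mathbb{Z}^n$, and both obtain \ref{dc2} by observing that a discrete local minimizer of $\hat{F}$ is one of $F$, then invoking \ref{d2} to force $[x']$ into a basin no higher than $B^*$, so that $f([x''])\le f(x^*)$. You are simply more explicit than the paper on two points it leaves implicit---the need to pass from the u-basin hypothesis on $F$ to \ref{d2} in the sense of Definition~\ref{def:cuencadisc} via the preceding theorem, and the identification $x^{**}=[x'']$ through that definition---but the argument is the same.
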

\begin{proof}
If $F$ satisfies \ref{d1} then $\hat{F}$ trivially satisfies \ref{dc1} by the previous remark in the definition of $\hat{F}$.

	If $[x']$ is a discrete local minimizer of $F$, then it is also one of $\hat{F}$. By \ref{d2}, $[x']$ cannot be in a basin of $f$ higher than $B^*$, so if $x''\leftarrow \mathcal{C}(f,[x'])$ then $f([x'']) \leq f(x^*)$.  
\end{proof}
\subsection{An Additional Property of $\hat{F}$}
As the algorithm $\mathcal{C}$ may be a continuous algorithm, the computation of \replaced{$\mathcal{C}(\hat{F},x)$}{$\mathcal{C}(x,\hat{F})$} solves the problem of minimizing $\hat{F}$ without the integrality constraints. So is worthy to know the amount of error that the continuous relaxation of the problem introduces. The following proposition establishes an upper bound of that error. 

\begin{theorem}
 Let $x'_c$ be the point obtained in the \deleted{step} \ref{algo:s4}\ref{algo:s41} of the algorithm \ref{genericalgo} (before rounding), with the filled function $\hat{F}$. Let $\delta_i=\lfloor (x'_c)_i \rfloor - (x'_c)_i$ then 
 \[\sum_{i=1}^n \delta_i^2 < \frac{F(x^*)-F(x')}{4\mid F(x')\mid}.\] In particular, if $\hat{F}(x^*)=0$ then  $\sum_{i=1}^n \delta_i^2 < \frac{1}{4}$.
\label{theorem:approx}
\end{theorem}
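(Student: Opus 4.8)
The plan is to combine two ingredients: the descent property \ref{hipC1} of the algorithm $\mathcal{C}$ together with the fact (noted right after the definition of $\hat{F}$) that $\hat{F}$ and $F$ agree on $\mathbb{Z}^n$, and an elementary inequality comparing $\sin^2(\pi t)$ with the squared rounding error of $t$. The first ingredient turns the descent achieved by $\mathcal{C}$ into a quantitative upper bound for $\sum_{i}\sin^2(\pi(x'_c)_i)$; the second lets me replace that sine sum by $4\sum_i\delta_i^2$. Throughout I read the $x'$ appearing in the stated bound as the point $x'_c$ produced in Step \ref{algo:s4}\ref{algo:s41}, so that $F(x')$ means $F(x'_c)$ with the ordinary (non-hatted) filled function.

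First I would pin down the chain of inequalities. The starting point $x_c$ fed to $\mathcal{C}$ in Step \ref{algo:s3}\ref{algo:s32} is an integer point of $\mathcal{N}(x^*)\setminus\{x^*\}$, so $\hat{F}(x_c)=F(x_c)$ because the $\sin^2$ terms vanish on $\mathbb{Z}^n$. Since $x^*$ is a strict local maximizer of $F$ by \ref{d1}, and $x_c\in\mathcal{N}(x^*)$ with $x_c\neq x^*$, we get $F(x_c)<F(x^*)$. Applying \ref{hipC1} to $x'_c\leftarrow\mathcal{C}(\hat{F},x_c)$ then yields
\[\hat{F}(x'_c)\leq\hat{F}(x_c)=F(x_c)<F(x^*).\]
Expanding the left side through $\hat{F}(x'_c)=F(x'_c)+|F(x'_c)|\sum_{i=1}^n\sin^2(\pi(x'_c)_i)$ and isolating the sum gives
\[|F(x'_c)|\sum_{i=1}^n\sin^2(\pi(x'_c)_i)<F(x^*)-F(x'_c),\]
so, provided $F(x'_c)\neq0$, division by $|F(x'_c)|$ produces an upper bound for $\sum_i\sin^2(\pi(x'_c)_i)$ of exactly the shape in the statement, up to the factor $4$.

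The remaining and most delicate step is the pointwise estimate $\sin^2(\pi(x'_c)_i)\geq 4\delta_i^2$. Writing $t=(x'_c)_i$ with rounding error $\delta$, one has $\sin^2(\pi t)=\sin^2(\pi\delta)$ since $\sin(\pi\cdot)$ only changes sign under integer shifts. The hard part is that this bound holds \emph{only} when $|\delta|\leq\tfrac12$, i.e. when $\delta$ is taken against the nearest integer (the rounding $[\cdot]$ of Section \ref{sec:1}); with a plain floor the error can approach $1$ and the inequality is false, so matching the statement requires reading $\delta_i$ as the nearest-integer error. Granting $|\delta|\leq\tfrac12$, the claim reduces to the concavity estimate $\sin(\pi u)\geq 2u$ on $[0,\tfrac12]$, valid because $\sin(\pi u)$ is concave there and meets the chord $2u$ at both endpoints; squaring gives $\sin^2(\pi\delta)\geq4\delta^2$. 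Summing over $i$ and combining with the previous bound yields $\sum_i\delta_i^2<\frac{F(x^*)-F(x'_c)}{4|F(x'_c)|}$. Finally, for the special case $\hat{F}(x^*)=0$, the chain above forces $F(x'_c)\leq\hat{F}(x'_c)<F(x^*)=0$, hence $|F(x'_c)|=-F(x'_c)$ and the right-hand side collapses to exactly $\tfrac14$.
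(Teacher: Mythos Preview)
Your argument is correct and mirrors the paper's proof almost exactly: both obtain $\hat{F}(x'_c)<\hat{F}(x^*)=F(x^*)$ from strict local maximality at $x^*$ together with \ref{hipC1}, rearrange the definition of $\hat{F}$ to bound $\sum_i\sin^2(\pi(x'_c)_i)$, and then dominate $4\delta_i^2$ by $\sin^2(\pi\delta_i)$. The only cosmetic differences are that the paper cites Jordan's inequality $\sin y/y>2/\pi$ on $(-\pi/2,\pi/2)$ where you use the equivalent concavity/chord argument for $\sin(\pi u)\geq 2u$ on $[0,\tfrac12]$, and that you are more careful than the paper in flagging that the bound $|\delta_i|\leq\tfrac12$ requires reading $\delta_i$ as the nearest-integer error rather than the literal floor in the statement (the paper simply asserts $|\delta_i|<\tfrac12$ without comment).
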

\begin{proof}
 Since $x^*$ is a strict local maximizer of $\hat{F}$ then $\hat{F}(x^*+e_i)<\hat{F}(x^*)$ for all $i$. More over, by the assumption \ref{hipC1} over $\mathcal{C}$, $\hat{F}(x')<\hat{F}(x*)$. Because $x^*$ is in $\mathbb{Z}^n$, $\hat{F}(x^*)=F(x^*)$ and by the definition of $\hat{F}$ 
 \begin{equation}
 \sum_{i=1}^n sin^2(x_i\pi)<\frac{F(x^*)-F(x')}{\mid F(x')\mid} 
 \label{ineq:prop:1}
 \end{equation}

 It is well known that $\frac{2}{\pi}<\frac{sin(y)}{y}$ for $-\frac{\pi}{2}<y<\frac{\pi}{2}$. From $\mid\delta_i\mid<\frac{1}{2}$, it follows that $4\delta_i^2<\sum_{i=1}^n sin^2(\delta_i\pi)$. Finally, being $sin(x_i\pi)=sin(\delta_i\pi)$, the inequality (\ref{ineq:prop:1}) gives
 \[\sum_{i=1}^n 4\delta_i^2<\frac{F(x^*)-F(x')}{\mid F(x')\mid}\] and the result follows.
 
 In particular, if $\hat{F}(x^*)=F(x^*)=0$ then $F(x')<0$ so \[\sum_{i=1}^n 4\delta_i^2<\frac{-F(x')}{\mid F(x')\mid}=1.\]
 
\end{proof}
\section{Implementation and Numerical Results}
\label{sec:results}
\label{sec:4}
In the following we present a complete algorithm for the optimization of a discrete function using a filled function.
It allows the restart of algorithm \ref{genericalgo} from the best obtained point. Also, if there is no improvement between successive iterations,
an element in the discrete vicinity is chosen as the starting point for the next iteration.
The algorithm ends after the maximum number of iterations \added{$m$} is reached. 

\added{Usually, a very small $m$ (between $1$ and $3$) will be enough because every iteration is a restart of the optimization procedure from a different starting point. For that reason, the cycles that can appear in \ref{genericcompletealgo:2} by choosing points $x',x'',x',\ldots$ are of small length. Moreover, a check can be added to stop the process if a point $x'$ is reached more than $m'$ times (a user defined parameter).
}

\begin{enumerate}[label=Step \arabic*]
\label{genericcompletealgo}
\item Let $x_0$ be an initial point, and $m$ the maximum number of iterations.
Set to zero the counters $n_{fu}$, $n_{fill}$ for the evaluations of the original and the filled functions. Set $i\leftarrow 0$.
Set $x\leftarrow x_0$ as the current point and $x_g\leftarrow x_0$, $f_g\leftarrow f(x_g)$ as the best point and best value of $f$.
 \item Use the algorithm of Section \ref{sec:1} with $x$ as the starting point to obtain a minimizer $x'$ of $f$.
       Add the number of original and filled functions evaluations to the counters $n_{fu}$ and $n_{fill}$. 
       \label{genericcompletealgo:1}
  \item If $f(x')<f_g$, update $x_g\leftarrow x'$, $f_g\leftarrow f(x')$ and make the current point $x\leftarrow x_g$. 
  Else, choose a point $x''\added{\neq x'}$ in the discrete vicinity of $x'$ and make $x\leftarrow x''$.
  \label{genericcompletealgo:2}
 \item  Increment $i\leftarrow i+1$.
 \item If $i<m$ go to \ref{genericcompletealgo:1}. Else, the point $x_g$ is taken as the global minimizer.
 \label{genericcompletealgo:3}
\end{enumerate}
\subsection{Implementation}
The test code was written in FORTRAN 90 using software for continuous global optimization based on curvilinear searches
(see \cite{curvi}) as the algorithm $\mathcal{C}$ to perform the local search. 

\subsection{Results}

\replaced{The algorithm was tested on several small and moderate problems with a number of variables between $2$ and $100$ (functions of few variables were tested for comparison with the literature)}{The algorithm was tested with several nonlinear functions, three of which are of high dimensionality}, using different starting points. \deleted{For comparison with the literature, functions of few variables were also tested.} Four filled functions were used and are those that have already been used in \cite{criticalreview}. In all cases, each filled function
was modified according to section \ref{sec:filledwrt}. They are:
\begin{itemize}
 \item 	The filled function $1$, proposed in \cite{discretefilledng2}.
 \item 	The filled function $2$, proposed in \cite{discretefilledng1}.
 \item 	The filled function $3$, proposed in \cite{newfilledYang1}.
 \item 	The filled function $4$, proposed in \cite{newfilledYang2}.
\end{itemize}

 \subsubsection{Problem 1: Rosenbrock Function}
The Rosenbrock function is convex, multimodal and $n$-dimensional. The domain is usually taken to be $[-5,5]$, so the feasible region contains $11^n$ points. The unique global minimizer is $\bar{\textbf{x}}=(1,\ldots,1)$  with $f(\bar{\textbf{x}})=0$. The expression is
\[f(\textbf{x})=\sum_{i=1}^{n}[100 (x_{i+1} - x_i^2)^ 2 + (1 - x_i)^2]\]

The results for $n=50,100$ are shown in Table \ref{tabla:testfunresrosenbrock}.
\begin{table}[ht]
\caption{Results for the Rosenbrock function. FF is the number of the filled function employed. $f_g$ is the minimum reached, $n_{fu}$ and $n_{fill}$ are the number of function evaluations and the number of filled function evaluations}
 \label{tabla:testfunresrosenbrock}
 \begin{tabular}{lllllll}
  \hline
  $n$&Initial point&FF&$f_g$&$n_{fu}$&$n_{fill}$\\
  \hline
  $50$&$(3,3,\ldots,3)$&$  1$ & $0$ & $      138085$ & $     8093482$ \\
& & $2$ & $0$ & $      178486$ & $      161600$ \\
& & $3$ & $0$ & $       77186$ & $     1928195$ \\
& & $4$ & $0$ & $       26686$ & $      225563$ \\

  $100$&$(3,3,\ldots,3)$& $1$&$    0$&$         540817 $&$       56664532$\\
  &&$2$&$    0$&$          701617$&$          643200$\\
  &&$3$&$    0$&$          299017$&$        13639579$\\
  &&$4$&$    0$&$           98017$&$         1508952$\\

\end{tabular}
\end{table}
 \begin{table}[ht]
\caption{Comparison between the minimum number of function evaluations in 
\cite{discretefilledng2} and our results using the same filled function.}
\label{tabla:tesfuncomp}
\begin{tabular}{lll}
\hline
& Our results& Best results in \cite{discretefilledng2}\\
 $n$&$n_{fu}$& $n_{fu}$ \\
 \hline
$50$&$138085$&$1707270$\\
$100$& $540817 $& $13466632$ \\
\end{tabular}
\end{table}
\subsubsection{Problem 2: Rastrigin Function}
This function is convex, multimodal and has $n$ variables. It was evaluate in the region $[-5,5]$ and has $11^n$ feasible points. The unique global minimizer is $\bar{\textbf{x}}=(0,\ldots,0)$ with $f(\bar{\textbf{x}})=0$. The expression is 
\[f(\bar{\textbf{x}})=10n + \sum_{i=1}^{n}(x_i^2 - 10cos(2\pi x_i))\]
The results for $n=50,100$ are shown in Table \ref{tabla:testfunresrastrigin}.
\begin{table}[ht]
\caption{Results for the Rastrigin function with filled function 2. $f_g$ is the minimum reached, $n_{fu}$ and $n_{fill}$ are the number of function evaluations and the number of filled function evaluations. \deleted{$R_f$ is the ratio between the number of function evaluations (the objective plus the filled) and the size of the feasible set.}}
 \label{tabla:testfunresrastrigin}
 \begin{tabular}{lllll}
  \hline
  $n$&Initial point&$f_g$&$n_{fu}$&$n_{fill}$\\
  \hline
  $50$&$(-1,-1,\ldots,-1)$& $0$ & $    456714$ & $    414100$ \\
  &$(-5,5,\ldots)$&$0$& $    645398$ & $    434704$ \\\\
  $100$&$(-1,-1,\ldots,-1)$& $0$ & $2945914$&$2653200$\\
  &$(-5,5,\ldots)$&$0$ & $   4181432$ & $   2734002$ \\

\end{tabular}
\end{table}
 
\subsubsection{Other Functions}
 The \replaced{other}{others} test functions and the results are shown in the appendix. Their expressions, and their global minima are detailed in Tables \ref{tabla:testfunc1} and \ref{tabla:testfunc2}.
 The comparison with other results is given in Tables \ref{tabla:tesfuncompavg} and \ref{tabla:tesfuncomp}. 
 Table \ref{tabla:testfunres} shows the results for all the additional functions.
\section{Conclusions}
\label{sec:5}
To solve discrete nonlinear optimization problems is always a
challenging task. In this field, filled function methods have been proved to be
useful. Here, a more general approach to the filled functions methods has been 
introduced making them more suitable for being used with modern optimization algorithms. We also presented a way to move from standard definitions of the filled functions to the new one and introduced a new discrete filled function with the useful property that a good continuous global optimization algorithm applied to it leads to an approximation of the solution of the nonlinear discrete problem. The numerical results show the improvements over the usual approaches.
\section{Appendix}
\begin{longtable}{lll}
\caption{Additional test functions: names and expressions}\\
\label{tabla:testfunc1}
\endhead
\hline 
 1&Colville&    {$\begin{aligned}
           min \ f(\mathbf{x})&=100(x_2-x_1^2)^2+(1-x_1)^2+90(x_4-x_3^2)^2+(1-x_3)^2\\
           &+10.1((x_2-1)^2+(x_4-1)^2)+19.8(x_2-1)(x_4-1)\\
           \\
           s.t.\ & -10\leq x_i \leq 10,\ x_i \in\mathbb{Z}\ i=1,2,3,4\end{aligned}$}\\   
 \hline
 2&Goldstein and Price&{$\begin{aligned}
           min \ f(x,y)&=[1 + (x + y + 1)^2(19 - 14x+3x^2 - 14y + 6xy + 3y^2)]\\
           &[30 + (2x - 3y)^2(18 - 32x + 12x^2 + 4y - 36xy + 27y^2)]\\
           \\
           s.t.\ & x=\frac{z_1}{1000}\\
           &y=\frac{z_2}{1000}\\
           &-2000\leq z_i \leq 2000,\ z_i \in\mathbb{Z}\ i=1,2
          \end{aligned}$}\\
\hline
3&Beale&{$\begin{aligned}
     min\ f(x, y) &= (1.5-x+xy)^2+(2.25-x+xy^2)^2+(2.625-x+xy^3)^2\\
     \\
     s.t.\ & x=\frac{z_1}{1000}\\
           &y=\frac{z_2}{1000}\\
     &  -10000\leq z_i \leq 10000\\
     &z_i\in\mathbb{Z},\ i=1,2,
    \end{aligned}$}\\
\hline    
4&Powell singular&
{$\begin{aligned}
   min\ f(\mathbf{x}) &= f=(x_1+10x_2)^2+5(x_3-x_4)^2+(x_2-2x_3)^4+10(x_1-x_4)^4\\
     \\
     s.t.\ & x_i=\frac{z_i}{1000}\\
     &  -10000\leq z_i \leq 10000\\
     &z_i\in\mathbb{Z},\ i=1,2,3,4
  \end{aligned}
$}\\
\hline
 5&Booth&{$\begin{aligned}
min\ f(\mathbf{x})&=(x_1+2x_2-7)^2+(2x_1+x_2-5)^2\\
\\
s.t.\ &-10\leq x_i\leq 10,\ x_i\in\mathbb{Z},\ i=1,2
 \end{aligned}$}\\
 \hline
6& Problem 10 in \cite{discretefilledng2} & {$\begin{aligned}
min\ f(\mathbf{x})&=(x_1-1)^2+(x_2-1)^2+n \sum_{i=1}^{n-1}(n-i)(x_i^2-x_{i+1})^2\\
\\
s.t.\ &-5\leq x_i\leq 5,\ x_i\in\mathbb{Z},\ i=1,\ldots,n\\
&n=25\\
 \end{aligned}$}\\
 \hline
 7&Three-Hump Camel&{$\begin{aligned}
min\ f(\mathbf{x})&=2x_1^2-1.05x_1^4+\frac{x_1^6}{6}+x_1x_2+x_2^2\\
\\
s.t.\ &-5\leq x_i\leq 5,\ x_i\in\mathbb{Z},\ i=1,2
 \end{aligned}$}\\
\hline 
8&Schaffer N. 1&{$\begin{aligned}
min\ f(\mathbf{x})&=0.5 + \frac{sin^2(x_1^2+x_2^2)^2-0.5}{(1+0.001(x_1^2+x_2^2))^2}\\
\\
s.t.\ &-100\leq x_i\leq 100,\ x_i\in\mathbb{Z},\ i=1,2
 \end{aligned}$}\\
\hline 
9&Leon&
{$\begin{aligned}
min\ f(\mathbf{x})&= 100(x_2 - x_1^{3})^2 + (1 - x_1)^2\\
\\
s.t.\ &0\leq x_i\leq 10,\ x_i\in\mathbb{Z},\ i=1,2
 \end{aligned}$}\\
\hline
\\ 
10&Salomon&
{$\begin{aligned}
min\ f(\mathbf x)&=1-cos\Bigg(2\pi\sqrt{\sum_{i=1}^{n}x_i^2}\Bigg)+0.1\sqrt{\sum_{i=1}^{n}x_i^2}\\
\\
s.t.\ &-100\leq x_i\leq 100,\ x_i\in\mathbb{Z},\ i=1,
 \end{aligned}$}\\
 \hline
\end{longtable}
 \begin{table}[h]
\begin{minipage}{2in}
 \caption{Test functions, global minima}
\label{tabla:testfunc2}
\begin{tabular}{lll}
 \hline
 Function&$x^*_g$&$f(x^*_g)$\\
 \hline
 1&$(1,1,1,1)$ & $0$\\
 2& $(0,-1)$&$3$\\
 3&$(3,0.5)$&$0$ \\
 4&$(0,0,0,0)$&$0$\\
 5&$(1,3)$&$0$\\
 6&$(1,1,\ldots,1)$&$0$\\
 7&$(0,0)$&$0$\\
 8& $(0,0)$&$0$\\
 9& $(1,1)$&$0$ \\ 
 10& $(0,0)$&$0$ 
\end{tabular}
\end{minipage}
\begin{minipage}{2in}
 \caption{Comparison with the average number of original function evaluations in \cite{criticalreview} }
\label{tabla:tesfuncompavg}
\begin{tabular}{llll}
\hline
Function&FF&Our results&Avg. results in \cite{criticalreview}\\
\hline
1 & $1$    &   $3131$ &       $2440.17$\\
 & $ 2 $   &  $ 6085  $ &  $1679.5$\\
 & $ 3 $    &  $ 685  $ & $3430.5$\\
 & $ 4 $     &  $353  $ &    $2189.5$\\
2&$1$	  & $  983   $ &  $49533.17$\\
 &$2 $   & $ 1754   $ &  $22249$\\
 &$3 $    & $  238  $ &    $48327.17$\\
 &$4 $    &  $ 207  $ &  $46329.83$\\
3&$1 $    &  $1021  $ &    $366914.3$\\
 &$2 $    & $4237  $ & $119368.8$\\
 &$3 $    &  $ 281  $ & $1000001.5$\\
 &$4 $    &  $ 191  $ & $365956.2$\\
4&$1 $    &  $7156  $ & $1818$\\
 &$2 $    & $12455  $ &  $1123$\\
 &$3 $    & $ 1655  $ &   $2574.33$\\
 &$4 $    & $  963  $ &    $1811.83$\\
\end{tabular}
\end{minipage}
\end{table}
  \begin{table}
\caption{Comparison between the minimum number of function evaluations in 
\cite{criticalreview} and our results.}
\label{tabla:tesfuncomp}
\begin{tabular}{llllll}
\hline
Function Number&Our results& & Best results in \cite{criticalreview}& \\
 & $n_{fu}$&$n_{fill}$ & $n_{fu}$&$n_{fill}$ \\
 \hline
1&$353$&$711$&$1431$&$5099$\\
2&$200$&$644$&$21978$&$151356$&\\ 					 
3&$191$&$1620$&$100002$&$206268$\\
4&$963$&$8436$&$1179$&$5349$\\
\end{tabular}
\end{table}
 \begin{table}[ht]

\caption{Results. FF is the number of the filled function employed. $f_g$ is the minimum reached, $n_{fu}$ and $n_{fill}$ are the number of function evaluations and the number of filled function evaluations}
 \label{tabla:testfunres}
 
\begin{tabular}{llllll}
  \hline
  Function number&Initial point&FF&$f_g$&$n_{fu}$&$n_{fill}$\\
  \hline

$1$  &  $(0,0,0,0)$  &  $1$  &  $0$  & $3131$ & $26317$  \\

  &  $(0,0,0,0)$  &  $2$  &  $0$  & $6085$ & $5760$  \\

 &  $(0,0,0,0)$  &  $3$  &  $0$  & $685$ & $1032$  \\

  &  $(0,0,0,0)$  &  $4$  &  $0$  & $353$ & $711$  \\

$2$  &  $(1,-1)$  &  $1$  &  $3$  & $983$ & $8895$ \\

  &  $(1,-1)$  &  $2$  &  $3$  & $1747$ & $3538$ \\

  &  $(1,-1)$  &  $3$  &  $3$  & $231$ & $831$  \\

  &  $(1,-1)$  &  $4$  &  $3$  & $200$ & $644$ \\

$3$  &  $(0,0)$  &  $1$  &  $0$  & $1021$ & $1652$ \\

  &  $(0,0)$  &  $2$  &  $ 0.211400\cdot 10^{-4}$  & $4237$ & $4050$  \\

  &  $(0,0)$  &  $3$  &  $0$   & $281$ & $1819$  \\

  &  $(0,0)$  &  $4$  &  $0$   & $191$ & $1620$  \\

$4$  &  $(10,-10,10,-10)$  &  $1$  &  $0$   & $7156$ & $42924$  \\

  &  $(10,-10,10,-10)$  &  $2$  &  $0$   & $12455$ & $91212$  \\

  &  $(10,-10,10,-10)$  &  $3$  &  $0$   & $1655$ & $7842$  \\

  &  $(10,-10,10,-10)$  &  $4$  &  $0$   & $963$ & $8436$  \\

$5$  &  $(0,0)$  &  $1$  &  $0$   & $912$ & $3283$  \\

  &  $(0,0)$  &  $2$  &  $0$   & $1688$ & $1600$  \\

  &  $(0,0)$  &  $3$  &  $0$   & $172$ & $264$  \\

  &  $(0,0)$  &  $4$  &  $0$   & $88$ & $180$ \\

$6$  &  $(2,\ldots,2)$  &  $1$  &  $0$   & $331076$ & $3553422$  \\

  &  $(2,\ldots,2)$  &  $2$  &  $0$   & $622376$ & $612000$  \\

  &  $(2,\ldots,2)$  &  $3$  &  $0$   & $58793$ & $1661261$  \\

  &  $(2,\ldots,2)$  &  $4$  &  $0$   & $22372$ & $179670$ \\

$7$  &  $(2,2)$  &  $1$  &  $0$   & $6719$ & $95301$  \\

  &  $(2,2)$  &  $2$  &  $0.866667$   & $13671$ & $698488$ \\

  &  $(2,2)$  &  $3$  &  $0.866667$   & $3047$ & $20343$ \\

  &  $(2,2)$  &  $4$  &  $0.866667$   & $4903$ & $8963$  \\

$8$  &  $(-50,50)$  &  $1$  &  $0.370922$   & $4549$ & $169851$  \\
  &  $(-50,50)$  &  $2$  &  $0.487382$   & $6483$ & $318875$  \\

  &  $(-50,50)$  &  $3$  &  $0.489069$   & $1823$ & $26688$  \\

  &  $(-50,50)$  &  $4$  &  $0.487382$   & $2039$ & $5528$ \\

$9$  &  $(10,10)$  &  $1$  &  $0$   & $1183$ & $152490$  \\

  &  $(10,10)$  &  $2$  &  $0$   & $1267$ & $600$  \\

  &  $(10,10)$  &  $3$  &  $0$   & $781$ & $348$ \\

 &  $(10,10)$  &  $4$  &  $0$   & $673$ & $302$ \\

$10$  &  $(-100,100,-100,\ldots)$  &  $1$  &  $0$   & $11818$ & $208250$  \\

  &  $(-100,100,-100,\ldots)$  &  $2$  &  $0$   & $20767$ & $186923$  \\

  &  $(-100,100,-100,\ldots)$  &  $3$  &  $1.5$   & $4580$ & $19432$  \\

  &  $(-100,100,-100,\ldots)$  &  $4$  &  $0$   & $2275$ & $2709$  \\

\end{tabular}

\end{table}
 \clearpage

 \medskip

\end{document}